\newtheorem{thm}{Theorem}[section]
\newtheorem{prop}[thm]{Proposition}
\numberwithin{equation}{section}
\newcommand{\X}{$X$}
\newcommand{\bp}{\overline{\partial}}
\begin{document}

\title{\textsc{A note of semistable  Higgs bundles over compact K\"{a}hler manifolds}}
\author{Yanci Nie and Xi Zhang}
\address{Yanci Nie\\School of Mathematical Sciences\\
University of Science and Technology of China\\
Hefei, 230026\\ } \email{nieyanci@mail.ustc.edu.cn}
\address{Xi Zhang\\Key Laboratory of Wu Wen-Tsun Mathematics\\ Chinese Academy of Sciences\\School of Mathematical Sciences\\
University of Science and Technology of China\\
Hefei, 230026,P.R. China\\ } \email{mathzx@ustc.edu.cn}
\subjclass[]{53C07, 58E15}
\keywords{Higgs bundle,  filtration, Yang-Mills-Higgs flow.}
\thanks{The authors were supported in part by NSF in China,  No.11131007.}
\maketitle
\begin{abstract}
In this note, by using the Yang-Mills-Higgs flow, we show that  semistable Higgs  bundles with vanishing the first and second Chern numbers over  compact K\"{a}her manifolds must admit a filtration whose quotients are Hermitian flat Higgs bundles.
\end{abstract}

\section{Introduction}

 Let $(X, \omega )$ be an $n$-dimensional compact K\"ahler manifold, where $\omega$ is the K\"ahler form. A Higgs bundle $(E , \overline{\partial }_{E}, \phi )$ over $X$ is a holomorphic bundle $(E , \overline{\partial }_{E})$ coupled with a Higgs field $\phi \in \Omega_X^{1,0}(End(E))$
such that $\overline{\partial}_{E}\phi =0$ and $\phi \wedge \phi =0$.
Higgs bundle was introduced by Hitchin (\cite{hitchin1987self}) in his study of the self-duality
equations on a Riemann surface. It has  rich structures and plays a role in many areas including gauge theory, K\"ahler and hyperk\"ahler geometry, group representations and nonabelian Hodge theory (\cite{simpson1988constructing}, \cite{simpson1992higgs}).
Given a coherent sheaf $\mathcal{F}$ on $X$, the $\omega $-slope $\mu_{\omega }(\mathcal{F})$ of $\mathcal{F}$ is defined by
$$\mu_{\omega}(\mathcal{F})=\frac{\mbox{deg}_{\omega}(\mathcal{F})}{\mbox{rank} F}=\frac{1}{\mbox{rank} \mathcal{F}}\int_{X}c_1(\mathcal{F})\wedge \frac{\omega^{n-1}}{(n-1)!}.$$
 A Higgs bundle $(E, \overline{\partial }_{E}, \phi)$ over $X$ is $\omega$-stable(resp. $\omega$-semistable) if $\mu_{\omega}(\mathcal{F})<\mu_{\omega}(E)(\mbox{resp}.\  \mu_{\omega}(\mathcal{F})\leq\mu_{\omega}(E))$ for every proper  $\phi$-invariant coherent subsheaf $\mathcal{F}$ of $E$.
 When the K\"ahler form $\omega$ is understood, we shall sometimes refer to
$(E, \overline{\partial }_{E}, \phi)$ simply stable or semistable, and we will also omit subscripts and write $\mu (E)$.

Given a Hermitian metric $H$ on a Higgs bundle $(E, \overline{\partial }_{E}, \phi  )$, we can consider the  Hitchin-Simpson connection (\cite{simpson1988constructing,simpson1992higgs})
 \begin{eqnarray*}D_{H, \overline{\partial }_{E}, \phi }= D_{H, \overline{\partial }_{E}} + \phi +\phi ^{\ast H},\end{eqnarray*}
where $D_{H, \overline{\partial }_{E}}$ is the Chern connection with respect to the metric $H$ and the holomorphic structure $\overline{\partial }_{E}$, and $\phi ^{\ast H}$ is the adjoint of $\phi $ with respect to the metric $H$.
The curvature of the  Hitchin-Simpson connection is
\begin{eqnarray*}
F_{H, \overline{\partial }_{E}, \phi}=F_{H} +[\phi , \phi ^{\ast H}] +D_{H, \overline{\partial }_{E}}^{1, 0}\phi + \overline{\partial }_{E} \phi^{\ast H},
\end{eqnarray*}
where $F_{H}$ is the curvature of the Chern connection $D_{H, \overline{\partial }_{E}}$. We say a Higgs bundle $(E, \overline{\partial }_{E}, \phi  )$ is Hermitian flat if there exists a Hermitian metric $H$ such that its Hitchin-Simpson curvature vanishes.
A Hermitian metric $H$ on Higgs bundle $(E, \overline{\partial }_{E}, \phi  )$ is said to be
 Hermitian-Einstein  if the
curvature $F_{H, \overline{\partial }_{E}, \phi}$ of the Hitchin-Simpson connection
  satisfies the Einstein condition, i.e
\begin{eqnarray*}
\sqrt{-1}\Lambda_{\omega} (F_{H} +[\phi , \phi ^{\ast H}])
=\lambda Id_{E},
\end{eqnarray*}
where  $\Lambda_{\omega }$ denotes the contraction of differential
forms by K\"ahler form $\omega $, and the real constant $\lambda $ is
given by $\lambda  =\frac{2\pi}{Vol(X)} \mu (E)$.
In \cite{hitchin1987self} and \cite{simpson1988constructing}, it is proved that a Higgs bundle admits
Hermitian-Einstein metric iff it's Higgs poly-stable. This is a
Higgs bundle version of the Donaldson-Uhlenbeck-Yau theorem. By Chern-Weil theory, we have
\begin{equation}\label{111}
\begin{split}
& \int_X |F_{H}+[\phi,\phi ^{*H}]|_{H }^{2}+2|\partial_{H}\phi |_{H }^{2} \frac{\omega^{n}}{n!}\\
=&\int_X |\sqrt{-1}\Lambda_{\omega}(F_{H}+[\phi ,\phi ^{*H }])-\lambda Id_{E}|^{2}_{H}\frac{\omega^{n}}{n!}
\\&+4\pi^{2}\int_{X}(2c_{2}(E)-c_{1}^{2}(E))\wedge\frac{\omega^{n-2}}{(n-2)!}+\lambda^{2}rk(E)Vol(X).\\
\end{split}
\end{equation}
From the above formula, it is easy to see that a stable Higgs bundle $(E, \overline{\partial }_{E}, \phi  )$ with $c_{1}(E)\cdot [\omega]^{n-1}=0$, $ch_{2}(E)\cdot [\omega]^{n-2}=0$ must be Hermitian flat.

We say a Higgs bundle $(E, \overline{\partial }_{E}, \phi  )$ admits an approximate Hermitian-Einstein structure if for every positive $\epsilon
$, there is a Hermitian metric $H_{\epsilon}$ such that
\begin{eqnarray*}
\max _{X} |\sqrt{-1}\Lambda_{\omega }(F_{H_{\epsilon}}+[\phi , \phi^{\ast H_{\epsilon}}])-\lambda Id_{E}|_{H_{\epsilon}}<\epsilon .
\end{eqnarray*}
In \cite{bruzzo2007metrics}, Bruzzo and Gra\~{n}a Otero proved that a Higgs bundle admitting  an approximate Hermitian-Einstein structure must be semi-stable. The converse implication was proved in \cite{car} when $\dim{X}=1$, and in \cite{li2012existence} for arbitrary dimension. So we know that, in Higgs bundles, admitting an approximate Hermitian-Einstein structure and the semi-stability are equivalent.

In the projective case, Simpson (Theorem 2 in \cite{simpson1992higgs}) proved that a semi-stable Higgs bundle $(E, \overline{\partial }_{E}, \phi  )$  with $c_{1}(E)\cdot [\omega]^{n-1}=0$, $ch_{2}(E)\cdot [\omega]^{n-2}=0$ must admit a filtration whose quotients are Hermitian flat Higgs bundles (rather than stable Higgs sheaves). In this paper, we generalize the above result to K\"ahler case. Simpson's proof essentially depends on Mehta and Ramanathan's result (Theorem 4.3 in \cite{mehta1984restriction}) that the restriction of a semistable holomorphic vector bundle to general hyperplane sections of certain arbitrarily high degrees is semistable. Since Mehta and Ramanathan's result is valid only when $X$ is a smooth projective variety, Simpson's argument can not be generalized to general K\"ahler  case directly. In this paper, we use the Yang-Mills-Higgs flow to study this problem. We prove that:
\begin{thm}\label{5}
 Let $(X,\omega)$ be a compact K\"ahler manifold and $\mathfrak{E}=(E,\overline{\partial }_{E}, \phi)$ be a Higgs bundle over $X$. $\mathfrak{E}$ is Higgs semi-stable with $c_{1}(E)\cdot [\omega]^{n-1}=0$, $ch_{2}(E)\cdot [\omega]^{n-2}=0$ if and only if it has a filtration in Higgs sub-bundles such that all quotients are Hermitian flat Higgs bundles.
\end{thm}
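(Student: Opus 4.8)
\noindent\emph{Proof strategy.}
The implication ``$\Leftarrow$'' is elementary. Suppose $0=\mathfrak{E}_{0}\subset\mathfrak{E}_{1}\subset\cdots\subset\mathfrak{E}_{\ell}=\mathfrak{E}$ is a filtration by Higgs sub-bundles with each $\mathfrak{Q}_{i}=\mathfrak{E}_{i}/\mathfrak{E}_{i-1}$ Hermitian flat, and choose on $\mathfrak{Q}_{i}$ a metric $K_{i}$ with vanishing Hitchin--Simpson curvature. Taking traces gives $c_{1}(\mathfrak{Q}_{i})\cdot[\omega]^{n-1}=0$, so $\mu(\mathfrak{Q}_{i})=0$ and the constant $\lambda$ attached to $(\mathfrak{Q}_{i},K_{i})$ is $0$; reading off \eqref{111} for $(\mathfrak{Q}_{i},K_{i})$ then forces $ch_{2}(\mathfrak{Q}_{i})\cdot[\omega]^{n-2}=0$. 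Additivity of the Chern character in the sequences $0\to\mathfrak{E}_{i-1}\to\mathfrak{E}_{i}\to\mathfrak{Q}_{i}\to0$ yields $c_{1}(E)\cdot[\omega]^{n-1}=0$ and $ch_{2}(E)\cdot[\omega]^{n-2}=0$. Finally each $\mathfrak{Q}_{i}$ carries a Hermitian--Einstein metric with $\lambda=0$, hence is Higgs poly-stable of slope $0$, hence Higgs semistable; intersecting an arbitrary $\phi$-invariant coherent subsheaf $\mathcal{S}\subset E$ with the filtration and summing the resulting inequalities $\deg(\mathcal{S}\cap\mathfrak{E}_{i}/\mathcal{S}\cap\mathfrak{E}_{i-1})\le0$ gives $\deg\mathcal{S}\le0=\operatorname{rank}(\mathcal{S})\,\mu(E)$, so $\mathfrak{E}$ is Higgs semistable.

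For ``$\Rightarrow$'' I would argue by induction on $\operatorname{rank}E$; since $c_{1}(E)\cdot[\omega]^{n-1}=0$ we have $\mu(E)=0$ and $\lambda=0$. If $\mathfrak{E}$ is Higgs stable (in particular if $\operatorname{rank}E=1$), the Hitchin--Simpson theorem (\cite{hitchin1987self,simpson1988constructing}) provides a Hermitian--Einstein metric $H$, and since $\lambda=0$ and $ch_{2}(E)\cdot[\omega]^{n-2}=0$, identity \eqref{111} forces $F_{H}+[\phi,\phi^{\ast H}]=0$ and $\partial_{H}\phi=0$, i.e.\ $\mathfrak{E}$ is Hermitian flat and the trivial filtration works. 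Now suppose $\mathfrak{E}$ is strictly semistable and fix a Jordan--H\"older filtration $0=\mathcal{E}_{0}\subsetneq\mathcal{E}_{1}\subsetneq\cdots\subsetneq\mathcal{E}_{\ell}=E$ by $\phi$-invariant coherent subsheaves with Higgs \emph{stable} quotients $\mathcal{Q}_{i}$, each of slope $0$. The plan is: (a) prove the Bogomolov-type bound $ch_{2}(\mathcal{G})\cdot[\omega]^{n-2}\le0$ for every Higgs semistable coherent sheaf $\mathcal{G}$ of slope $0$, so that additivity of the Chern character together with $ch_{2}(E)\cdot[\omega]^{n-2}=0$ forces $ch_{2}(\mathcal{Q}_{i})\cdot[\omega]^{n-2}=0$ for all $i$; (b) prove that a Higgs stable coherent sheaf of slope $0$ with $ch_{2}\cdot[\omega]^{n-2}=0$ is a Hermitian flat Higgs \emph{bundle}; and (c) conclude. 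Granting (b), every $\mathcal{Q}_{i}$ is locally free, whence a downward induction (the kernel of a surjection of vector bundles is a sub-bundle) shows that each $\mathcal{E}_{i}$ is a Higgs sub-bundle of $\mathfrak{E}$, so the Jordan--H\"older filtration is already the desired one; alternatively, once (b) is available one may apply the induction hypothesis to a saturated destabilizing Higgs subsheaf (automatically locally free) and its quotient and then splice.

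The Yang--Mills--Higgs flow enters in (a) and (b), replacing the Mehta--Ramanathan restriction theorem of Simpson's projective argument, which has no analogue on a general K\"ahler manifold. For (a): by semistability of $\mathcal{G}$ together with \cite{li2012existence}, the locally free part of $\mathcal{G}$ admits an approximate Hermitian--Einstein structure $\{H_{\epsilon}\}$; substituting $H_{\epsilon}$ into \eqref{111} with $\lambda=0$ and letting $\epsilon\to0$ gives
\[
4\pi^{2}\!\int_{X}(2c_{2}-c_{1}^{2})\wedge\tfrac{\omega^{n-2}}{(n-2)!}=\lim_{\epsilon\to0}\int_{X}\Bigl(|F_{H_{\epsilon}}+[\phi,\phi^{\ast H_{\epsilon}}]|^{2}+2|\partial_{H_{\epsilon}}\phi|^{2}\Bigr)\tfrac{\omega^{n}}{n!}\ \ge\ 0,
\]
that is $ch_{2}(\mathcal{G})\cdot[\omega]^{n-2}\le0$ (the integrations by parts in \eqref{111} being legitimate on the locally free locus, which has complex codimension $\ge2$, because the metrics are admissible). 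For (b): a Higgs stable sheaf $\mathcal{Q}$ of slope $0$ carries an admissible Hermitian--Einstein metric $H$ on $U=X\setminus S$, $S=\operatorname{Sing}(\mathcal{Q})$ --- a Higgs-bundle analogue of the Bando--Siu construction, which one produces by running the flow on $U$ and analysing its limit; since $\lambda=0$ and $ch_{2}(\mathcal{Q})\cdot[\omega]^{n-2}=0$, applying \eqref{111} on $U$ forces $F_{H}+[\phi,\phi^{\ast H}]=0$ and $\partial_{H}\phi=0$ there, so the Hitchin--Simpson connection is flat on $U$; as $\operatorname{codim}_{\mathbb{C}}S\ge2$ and the metric is admissible, a removable-singularity argument identifies $\mathcal{Q}$ with a flat --- hence locally free and Hermitian flat --- Higgs bundle across $S$. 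I expect (b) to be the main obstacle: turning a stable Higgs \emph{sheaf} with vanishing discriminant into a Hermitian flat Higgs \emph{bundle} is exactly the step where Simpson could restrict to a general hyperplane section and kill the singularities but we cannot, so one must instead control the flow near $S$ and invoke removable-singularity theory for flat connections. (One could equally well organise the whole argument around running the flow on $\mathfrak{E}$ itself and reading the filtration off the structure of its limiting connection; the analytic heart is the same.)
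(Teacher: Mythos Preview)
Your ``$\Leftarrow$'' direction is fine and essentially parallel to the paper's (the paper phrases it via Proposition~\ref{115}: extensions of approximate Hermitian flat Higgs bundles are approximate Hermitian flat, hence semistable with the required Chern number vanishing).

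For ``$\Rightarrow$'' your route is genuinely different from the paper's, and the difference matters. You take a Jordan--H\"older filtration and then try to upgrade each stable quotient \emph{sheaf} $\mathcal{Q}_{i}$ to a bundle by producing an admissible Hermitian--Einstein metric on its locally free locus (a Higgs Bando--Siu theorem), deducing flatness from \eqref{111}, and then invoking a removable-singularities theorem for flat Hitchin--Simpson connections across codimension~$\ge 2$. Both ingredients are substantial analytic theorems in their own right; neither is proved here, and the sentence ``which one produces by running the flow on $U$ and analysing its limit'' is not a proof. Your step (a) has the same problem: the approximate Hermitian--Einstein structure of \cite{li2012existence} is established for Higgs \emph{bundles}, not sheaves, so the displayed limit computing the Bogomolov number is not justified for a general torsion-free $\mathcal{G}$, and the phrase ``the integrations by parts in \eqref{111} being legitimate on the locally free locus \dots\ because the metrics are admissible'' is precisely what must be proved.

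The paper sidesteps all of this by never leaving the world of bundles. It first runs the Yang--Mills--Higgs flow on $E$ itself and, using the $\epsilon$-regularity theorem (Proposition~\ref{21}) together with Proposition~\ref{6} and \eqref{111}, shows that $E$ is \emph{approximate Hermitian flat} (i.e.\ $\sup_{X}|F_{H,\overline{\partial}_{E},\phi}|\to 0$ along the flow). Then, rather than a full Jordan--H\"older filtration, it picks a $\phi$-invariant reflexive subsheaf $S\subset E$ of \emph{minimal} rank with $\deg S=0$ (hence stable), and proves directly that $S$ is a sub\emph{bundle}: the Gauss--Codazzi equation \eqref{112} gives $\tfrac{\sqrt{-1}}{2\pi}F_{H_{S}}\le\varepsilon(t)\omega$, so $c_{1}(S)=0$ and $\det S$ is Hermitian flat; viewing the inclusion $\det S\hookrightarrow\wedge^{p}E$ as a $\phi_{p}$-invariant holomorphic section of the approximate Hermitian flat bundle $\wedge^{p}E\otimes(\det S)^{-1}$, Proposition~\ref{113} (the Lelong-number argument) shows this section has no zeros, so $S$ is a subbundle by \cite{demailly1994compact}. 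Now $S$ and $Q=E/S$ are both bundles, and the Bogomolov inequality (applied only to bundles, via \eqref{111} and the approximate Hermitian--Einstein structure) forces $ch_{2}(S)\cdot[\omega]^{n-2}=ch_{2}(Q)\cdot[\omega]^{n-2}=0$; hence $S$ is Hermitian flat and one recurses on $Q$. The crucial idea you are missing is Proposition~\ref{113}: approximate Hermitian flatness of $E$ forces nonvanishing of the relevant section, which is exactly what turns the subsheaf into a subbundle without any Bando--Siu or removable-singularity machinery.
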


\medskip

This paper is organized as follows. In section 2, we recall some basic notions for Higgs bundles including  some basic results of the Donaldson heat flow and the Yang-Mills-Higgs flow.
In section 3, we give a proof of Theorem \ref{5}.
\medskip

\section{Preliminaries}

Let $(X, \omega )$ be an $n$-dimensional compact K\"ahler manifold, where $\omega$ is the K\"ahler form.
A Higgs vector bundle $(E,\overline{\partial}_E,\phi)$ is called Higgs approximate Hermitian flat, if for every $\epsilon>0$, there exists a Hermitian metric $H_{\epsilon}$, such that \begin{eqnarray*}\sup\limits_{X}|F_{H_{\epsilon}, \overline{\partial }_{E}, \phi}|_{H_{\epsilon}}<\epsilon.\end{eqnarray*}

Let $S$ be a $\phi$-invariant holomorphic  sub-bundle of  $(E,\overline{\partial}_E,\phi)$. Consider the exact sequence of Higgs bundles:
$$0\longrightarrow (S,\phi_S)\stackrel{ i}{\longrightarrow}(E,\phi)\stackrel{p}{\longrightarrow}(Q,\phi_Q)\longrightarrow0,$$
where $Q$ is the quotient $E/S$, $\phi_S$ and $\phi_Q$ are the induced Higgs fields. Given a Hermitian metric on $E$, we have the following exact sequence:

$$0\longleftarrow(S,\phi_S)\stackrel{\pi^{*H}}{\longleftarrow}(E,\phi)\stackrel{p^{*H}}{\longleftarrow}(Q,\phi_Q)
\longleftarrow0.$$
From the above, we have the bundle isomorphism on \X:
$$f_H= i\oplus p^{*H}:S\oplus Q\rightarrow E.$$
Then the pull-back metric, holomorphic structure and Higgs field on $S\oplus Q$ are
$$f^{\ast}_H(H)=\left (
\begin{matrix}
H_{S} &  0 \\
0   & H_{Q}\\
\end{matrix}
\right ),\ \ \ \ f_{H}^{\ast }(\overline{\partial }_{E})=\left (
\begin{matrix}
\overline{\partial}_{S} &  \gamma \\
0   & \overline{\partial}_{Q}\\
\end{matrix}
\right ),$$

$$f^{\ast}_H(\phi)=\left(
\begin{matrix}
\phi_S & \zeta \\
0 & \phi_Q \\
\end{matrix}\right),
$$
where $\gamma (t)\in \Omega_X^{0,1}(Q^*\otimes S)$ is the second fundamental form and $\zeta\in\Omega_X^{1,0}(Q^*\otimes S)$.
We have  the following Gauss-Codazzi equation for the Hitchin-Simpson's curvature:
\begin{equation}\label{112}
\begin{split}
f^*_H(F_{H, \overline{\partial }_{E}, \phi})=
&f^{\ast}_H(F_H)+f^{\ast}_H([\phi,\phi^{\ast}])+f^{\ast}_H(\partial_H(\phi))+f^{\ast}_H(\overline{\partial}_E(\phi^*))
\\=&\left (
\begin{matrix}
F_{H_{S}}-\gamma \wedge \gamma^{\ast}  &  D^{1,0}_{S\otimes Q^{\ast}}\gamma \\
-D^{0,1}_{S^{\ast}\otimes Q}\gamma^{\ast}   & F_{H_{Q}}-\gamma^{\ast} \wedge \gamma \\
\end{matrix}
\right )
\\+&\left(
\begin{matrix}
[\phi_S,\phi_S^{\ast}]+\zeta\wedge\zeta^{\ast} & \zeta\wedge\phi_Q^{\ast}+\phi_S^{\ast}\wedge\zeta \\
\zeta^{\ast}\wedge\phi_S+\phi_Q\wedge\zeta^{\ast}  & [\phi_Q,\phi_Q^{\ast}]+\zeta^{\ast}\wedge\zeta \\ \end{matrix}\right)
\\+&\left(\begin{matrix}
\partial_{H_{S}}\phi_{S}-\zeta \wedge \gamma^{\ast }  &  D^{1,0}_{Q^{\ast}\otimes S} \zeta \\
-\gamma^{\ast }\wedge \phi_{S}-\phi_{Q}\wedge \gamma^{\ast}   & \partial_{H_{Q}}\phi_{Q}-\gamma^{\ast }\wedge \zeta \\
\end{matrix}\right)
\\+&\left (
\begin{matrix}
\overline{\partial}_{S}\phi_{S}^{\ast}+\gamma \wedge \zeta^{\ast }  &  \gamma \wedge \phi_{Q}^{\ast }+\phi_{S}^{\ast }\wedge \gamma  \\
\overline{\partial }_{S^{\ast}\otimes Q}\zeta^{\ast}   & \overline{\partial}_{Q}\phi_{Q}^{\ast }+\zeta^{\ast }\wedge \gamma \\
\end{matrix}
\right ).
\end{split}
\end{equation}
\begin{prop}\label{115}
Let $0\rightarrow (S,\phi_S){\rightarrow}(E,\phi){\rightarrow}(Q,\phi_Q)\rightarrow0$ be an exact sequence of Higgs bundles on \X. If $(S,\phi_S)$ and $(Q,\phi_Q)$ are Higgs approximate Hermitian flat, then $(E,\phi)$ is Higgs approximate Hermitian flat.
\end{prop}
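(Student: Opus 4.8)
The plan is to realize $(E,\overline{\partial}_E,\phi)$ as (a member of) a family of Higgs bundles whose extension data---both the second fundamental form $\gamma$ and the off-diagonal Higgs component $\zeta$---can be made arbitrarily small by a rescaling automorphism, and then to glue on the block-diagonal the metrics supplied by the hypothesis. First I would fix once and for all a background Hermitian metric $H_0$ on $E$; the associated $C^\infty$ isomorphism $f_{H_0}\colon S\oplus Q\to E$ of the excerpt identifies $(E,\overline{\partial}_E,\phi)$ with $(S\oplus Q,\overline{\partial}',\phi')$, where $\overline{\partial}'$ and $\phi'$ are block upper triangular with diagonal entries $(\overline{\partial}_S,\overline{\partial}_Q)$, $(\phi_S,\phi_Q)$ and off-diagonal entries $\gamma,\zeta$. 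For $t\in(0,1]$ put $g_t=\mathrm{id}_S\oplus t\,\mathrm{id}_Q$ and $\overline{\partial}_t=g_t^{-1}\overline{\partial}'g_t$, $\phi_t=g_t^{-1}\phi'g_t$. Since $g_t$ is a constant scalar on each summand it commutes with $\overline{\partial}_S,\overline{\partial}_Q,\phi_S,\phi_Q$, so $\overline{\partial}_t,\phi_t$ are again block upper triangular with the same diagonal entries and off-diagonal entries $t\gamma$, $t\zeta$; moreover $g_t$ is a holomorphic Higgs-bundle isomorphism $\mathfrak{E}_t:=(S\oplus Q,\overline{\partial}_t,\phi_t)\to(S\oplus Q,\overline{\partial}',\phi')$, hence $f_{H_0}\circ g_t\colon\mathfrak{E}_t\to(E,\overline{\partial}_E,\phi)$ is an isomorphism of Higgs bundles for every $t$.

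Next, fixing $\epsilon>0$, I would use that $(S,\phi_S)$ and $(Q,\phi_Q)$ are Higgs approximate Hermitian flat to choose Hermitian metrics $H_S$ on $S$ and $H_Q$ on $Q$ with $\sup_X|F_{H_S,\overline{\partial}_S,\phi_S}|_{H_S}<\epsilon/2$ and $\sup_X|F_{H_Q,\overline{\partial}_Q,\phi_Q}|_{H_Q}<\epsilon/2$, and equip every $\mathfrak{E}_t$ with the block-diagonal metric $\widehat{H}:=H_S\oplus H_Q$ (independent of $t$). For the exact sequence $0\to(S,\phi_S)\to\mathfrak{E}_t\to(Q,\phi_Q)\to0$ with this metric, the second fundamental form is $t\gamma$ and the off-diagonal Higgs component is $t\zeta$, so the Gauss--Codazzi identity \eqref{112} reads
\[
F_{\widehat{H},\overline{\partial}_t,\phi_t}=\begin{pmatrix}F_{H_S,\overline{\partial}_S,\phi_S}&0\\0&F_{H_Q,\overline{\partial}_Q,\phi_Q}\end{pmatrix}+R_t,
\]
where every entry of $R_t$ is a sum of terms each carrying at least one factor of $t$ (the off-diagonal blocks linear in $t\gamma,t\zeta$ and their covariant derivatives with respect to the fixed connections of $H_S,H_Q$, the diagonal corrections quadratic). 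Since $X$ is compact and all tensors occurring are smooth and $t$-independent, there is a constant $C$ (depending on $\epsilon$ but not on $t$) with $\sup_X|R_t|_{\widehat{H}}\le Ct$ for $t\in(0,1]$.

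Finally I would choose $t\in(0,1]$ so small that $Ct<\epsilon/2$, giving $\sup_X|F_{\widehat{H},\overline{\partial}_t,\phi_t}|_{\widehat{H}}<\epsilon$, and transport $\widehat{H}$ to $E$ along the Higgs-bundle isomorphism $f_{H_0}\circ g_t\colon\mathfrak{E}_t\to(E,\overline{\partial}_E,\phi)$ to obtain a Hermitian metric $H_\epsilon$ on $(E,\overline{\partial}_E,\phi)$ with $\sup_X|F_{H_\epsilon,\overline{\partial}_E,\phi}|_{H_\epsilon}<\epsilon$, using that the Hitchin--Simpson curvature and its pointwise norm are preserved under an isometric holomorphic isomorphism of Higgs bundles. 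As $\epsilon>0$ is arbitrary, $(E,\overline{\partial}_E,\phi)$ is Higgs approximate Hermitian flat. The genuinely delicate point is the first step: a holomorphic extension need not split, so $\gamma$ cannot simply be discarded; the scalar rescaling $g_t$ is precisely what converts non-splitness into an error that is uniformly $O(t)$, and one must check that it acts only on the extension data $\gamma,\zeta$ and leaves the diagonal pieces $(S,\phi_S),(Q,\phi_Q)$ untouched (which is why $g_t$ is taken to be a scalar on each summand). Everything after that is a uniform estimate on the compact manifold $X$.
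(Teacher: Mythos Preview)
Your proof is correct and is essentially the paper's argument viewed dually: the paper keeps the Higgs structure on $S\oplus Q$ fixed and rescales the metric on the $Q$-summand by $\rho^{-2}$, whereas you keep the block-diagonal metric $\widehat{H}$ fixed and conjugate the Higgs structure by $g_t=\mathrm{id}_S\oplus t\,\mathrm{id}_Q$; since pushing $\widehat{H}$ forward along $f_{H_0}\circ g_t$ produces exactly the metric $H_S\oplus t^{-2}H_Q$ on $S\oplus Q$ (and hence the paper's $\widetilde{H}_{\epsilon,\rho}$ on $E$ with $\rho=t$), the two computations are literally the same. Your remainder $R_t$ and its $O(t)$ bound correspond to the paper's $B_{\rho,\epsilon}+C_{\rho,\epsilon}$ and the $\rho^4$, $\rho^2$ scaling of their norms.
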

\begin{proof}
Let $f:S\oplus Q\stackrel{\sim}{\rightarrow }E$ be a $C^{\infty}$ bundle isomorphism. For every $\epsilon>0$, let $H_{S,\epsilon}$ and $H_{Q,\epsilon}$ be Hermitian metrics on $S$ and $Q$ respectively, such that $\sup\limits_{X}|F_{H_{S,\epsilon},\overline{\partial}_S,\phi_S}|<\epsilon$ and $\sup\limits_{X}|F_{H_{Q,\epsilon},\overline{\partial}_Q,\phi_Q}|<\epsilon$. For small $\rho >0$, we set a Hermitian metric on $E$ by $H_{\epsilon,\rho}=(f^{-1})^{\ast}\widetilde{H}_{\epsilon,\rho},$ where
\begin{eqnarray*}
\widetilde{H}_{\epsilon,\rho}=\left(\begin{matrix} H_{S,\epsilon} & \\ & \frac{1}{\rho^2}H_{Q,\epsilon}
\end{matrix}\right).
\end{eqnarray*}
The pull-back holomorphic structure and Higgs field on $S\oplus Q$ can be written as:
$$f^{\ast}(\overline{\partial}_E)=\left(
\begin{matrix}
\overline{\partial}_S & \gamma\\
0  &  \overline{\partial}_Q
\end{matrix}
\right),\ \ \ f^{\ast}(\phi )=\left(\begin{matrix}
\phi_S & \zeta\\
0 & \phi_Q
\end{matrix}\right).$$
We set $\gamma^*_{\rho}=\gamma^{*\widetilde{H}_{\epsilon , \rho}}$ and $\ \zeta_{\rho}^*=\zeta^{*\widetilde{H}_{\epsilon , \rho}}$. A simple calculation implies $\gamma_{\rho}^*=\rho^2\gamma_1^*$ and $\zeta_{\rho}^*=\rho^2\zeta_1^*.$
Then
\begin{equation*}
\begin{split}
f^{\ast}(F_{H_{\epsilon , \rho},\overline{\partial}_E,\phi})=&\left(\begin{matrix}
F_{H_{S,\epsilon,},\overline{\partial}_S,\phi_S} & 0\\
0 & F_{H_{Q,\epsilon,},\overline{\partial}_Q,\phi_Q}
\end{matrix}
\right)
\\+&\left(
\begin{matrix}
(\zeta-\gamma)\wedge(\zeta_{\rho}^*+\gamma_{\rho}^*) &0
\\0 & (\zeta_{\rho}^*+\gamma_{\rho}^*)\wedge(\zeta-\gamma)
\end{matrix}
\right)
\\+&\left(
\begin{matrix}
0 & D^{1,0}_{Q^*\otimes S}(\gamma+\zeta)+
\\ &(\gamma+\zeta)\wedge\phi_Q^*+\phi_S^*\wedge(\gamma+\zeta)
\\D^{0,1}_{S^*\otimes Q}(\zeta_{\rho}^*-\gamma_{\rho}^*)+ & \\ (\zeta_{\rho}^*-\gamma_{\rho}^*)\wedge\phi_S+
\phi_Q\wedge(\zeta_{\rho}^*-\gamma_{\rho}^*) & 0
\end{matrix}
\right)
\\=:&A_{\epsilon}+B_{\rho,\epsilon}+C_{\rho,\epsilon} ,
\end{split}
\end{equation*}
and
\begin{eqnarray*}\begin{array}{lll}|F_{H_{\epsilon ,\rho},\overline{\partial}_E,\phi}|^2_{H_{\epsilon,\rho}}
&=&|f^{\ast}(F_{H_{\epsilon , \rho},\overline{\partial}_E,\phi})|^2_{\widetilde{H}_{\epsilon , \rho}}\\
&\leq &|A_{\epsilon}|^2_{\widetilde{H}_{\epsilon , \rho}}+|B_{\rho,\epsilon}|^2_{\widetilde{H}_{\epsilon \rho}}+|C_{\rho,\epsilon}|^2_{\widetilde{H}_{\epsilon \rho}}\\
&=&|A_{\epsilon}|^2_{\widetilde{H}_{\epsilon , 1}}+\rho^4|B_{1,\epsilon}|^2_{\widetilde{H}_{\epsilon , 1}}+\rho^2|C_{1,\epsilon}|^2_{\widetilde{H}_{\epsilon , 1}}.\\
\end{array}
\end{eqnarray*}
For every $\epsilon>0$, we can choose $\rho$ small enough, such that $\sup\limits_{X}|F_{H_{\epsilon , \rho},\overline{\partial}_E,\phi}|_{H_{\epsilon , \rho}}<3\epsilon$.
\end{proof}

\medskip

The following proposition can be seen as  a conclusion of the Proposition 3.10. in \cite{bruzzo2007metrics}. Here,  we  give a proof in detail for reader's convenience.

\begin{prop}\label{113}
Let $(E,\overline{\partial}_E,\phi)$ be a Higgs approximate Hermitian flat vector bundle over a compact K\"ahler manifold $(X,\omega)$. If $s$ is a nontrivial $\phi$-invariant holomorphic section of $E$, then $s$ has no zeros.
\end{prop}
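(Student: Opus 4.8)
The plan is to combine a Bochner estimate for $\log|s|^2_{H_\epsilon}$ with a degree estimate for the line subsheaf generated by $s$. Since $s$ is $\phi$-invariant and holomorphic we have $\overline{\partial}_E s=0$ and $\phi(s)=0$, so for any Hermitian metric $H$ one has $\langle\phi^{*H}s,s\rangle_H=0$, hence $\partial|s|^2_H=\langle\partial_H s,s\rangle_H$; applying $\overline{\partial}$, using $\overline{\partial}_E\partial_H s=F_H(s)$ and the Cauchy--Schwarz (Griffiths) inequality together with the elementary identity $\langle\sqrt{-1}\,\Lambda_\omega[\phi,\phi^{*H}]s,s\rangle_H=|\phi^{*H}s|^2_H\ge 0$ (both coming from $\phi(s)=0$), one obtains on $\{s\neq 0\}$
\[
\sqrt{-1}\,\partial\overline{\partial}\log|s|^2_H\ \ge\ -\Big\langle\sqrt{-1}\big(F_H+[\phi,\phi^{*H}]\big)\tfrac{s}{|s|_H},\tfrac{s}{|s|_H}\Big\rangle_H .
\]
Since $F_H+[\phi,\phi^{*H}]$ is the $(1,1)$-part of $F_{H,\overline{\partial}_E,\phi}$, its pointwise norm is at most $|F_{H,\overline{\partial}_E,\phi}|_H$; with $H=H_\epsilon$ and tracing with $\omega$ this gives $\Delta_\omega\log|s|^2_{H_\epsilon}\ge -c_n\epsilon$ on $\{s\neq 0\}$, where $\Delta_\omega$ is normalized by $\int_X\Delta_\omega f\,\frac{\omega^n}{n!}=0$ and $c_n$ depends only on $n$.

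Assume, towards a contradiction, that $s$ has a zero. Let $L\subseteq E$ be the saturation of the rank one subsheaf $s\cdot\mathcal{O}_X$; it is a line bundle, $\phi$-invariant with vanishing induced Higgs field, and $s$ gives a nontrivial holomorphic section $s_L$ of $L$. Restricting $H_\epsilon$ to $L$ produces a Hermitian metric smooth off $\{s=0\}$ whose local weights are locally of the form $-\log(\mathrm{plurisubharmonic})+(\mathrm{smooth})$, hence in $L^1_{\mathrm{loc}}$; on $X\setminus\{s=0\}$ its curvature equals $-\sqrt{-1}\,\partial\overline{\partial}\log|s|^2_{H_\epsilon}$, so by \eqref{112} and the estimate above $\sqrt{-1}\,\Lambda_\omega\Theta_{H_\epsilon|_L}\le c_n\epsilon$ there. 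Integrating, and noting that the (at most codimension one) singular locus contributes no positive mass to $\int_X\Theta_{H_\epsilon|_L}\wedge\frac{\omega^{n-1}}{(n-1)!}$ because any divisorial part of the curvature current enters with a nonpositive coefficient, one obtains $\deg_\omega L=\frac{1}{2\pi}\int_X\sqrt{-1}\,\Lambda_\omega\Theta_{H_\epsilon|_L}\,\frac{\omega^n}{n!}\le\frac{c_n\epsilon}{2\pi}\mathrm{Vol}(X)$; letting $\epsilon\to0$ gives $\deg_\omega L\le0$.

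If $s$ vanishes along a nonzero effective divisor $D$ then $L\cong\mathcal{O}_X(D)$, so $\deg_\omega L=\int_D\frac{\omega^{n-1}}{(n-1)!}>0$, a contradiction; hence $s$ could vanish only in codimension $\ge 2$, which forces $L\cong\mathcal{O}_X$ and $\deg_\omega L=0$. In that remaining case one uses that near the zero set $\log|s|^2_{H_\epsilon}$ differs from $\log$ of a sum of squares of the local holomorphic components of $s$ by a smooth function, so $\Theta_{H_\epsilon|_L}=-\sqrt{-1}\,\partial\overline{\partial}\log|s|^2_{H_\epsilon}$ is strictly negative there with a fixed $\epsilon$-independent lower bound of order $-\mathrm{dist}(\,\cdot\,,\{s=0\})^{-2}$, which together with $\deg_\omega L=0$ and the sharp (up to $O(\epsilon)$) balance above is again impossible as $\epsilon\to 0$; alternatively, after scaling so that $\int_X|s|^2_{H_\epsilon}\,dV_\omega=1$, one applies Moser iteration to $\Delta_\omega|s|^2_{H_\epsilon}\ge-c_n\epsilon|s|^2_{H_\epsilon}$ to get a uniform $\sup$-bound, deduces $\int_X|D_{H_\epsilon,\overline{\partial}_E,\phi}s|^2_{H_\epsilon}\,dV_\omega\to0$ and hence $|s|^2_{H_\epsilon}\to\mathrm{const}$ in $W^{1,2}$, and excludes a zero of $s$ by a strong maximum principle near it. The step I expect to be the main obstacle is precisely this codimension $\ge 2$ case: controlling $|s|^2_{H_\epsilon}$, and the induced weights on $L$, near such a zero uniformly in $\epsilon$ (equivalently, preventing the Bochner slack $c_n\epsilon$ from being absorbed into a thin ``well'' of $|s|^2_{H_\epsilon}$), and justifying the first Chern class identity for the singular induced metric on $L$.
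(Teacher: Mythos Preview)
Your proposal has two issues, one minor and one substantial.

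\medskip

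\textbf{Minor: the meaning of ``$\phi$-invariant''.} You assume $\phi(s)=0$, but in this paper (and in standard usage) a section is $\phi$-invariant if the line it spans is preserved, i.e.\ $\phi(s)=\eta\otimes s$ for some holomorphic $(1,0)$-form $\eta$. Fortunately the key positivity survives: one still has $\sqrt{-1}\,\langle[\phi,\phi^{*H}]s,s\rangle_H\ge 0$ as a $(1,1)$-form (the paper checks this by writing $(\phi_j)^{*H}s=\overline{\eta_j}\,s+G_j$ and observing that the cross terms cancel). So your Bochner inequality
\[
\sqrt{-1}\,\partial\overline{\partial}\log|s|^2_H\ \ge\ -\,\frac{\langle\sqrt{-1}(F_H+[\phi,\phi^{*H}])s,s\rangle_H}{|s|^2_H}
\]
is still correct, but your derivation of it needs to be adjusted.

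\medskip

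\textbf{Substantial: the codimension $\ge 2$ case.} Your degree argument via the saturated line subsheaf $L$ is fine when $s$ vanishes along a divisor, but --- as you yourself flag --- it gives nothing once $\mathrm{codim}\{s=0\}\ge 2$: then $L\cong\mathcal{O}_X$, $\deg_\omega L=0$, and you are left trying to squeeze a contradiction out of a singular induced metric on a trivial line bundle. Neither of your two suggested exits works as stated. The ``$-\mathrm{dist}^{-2}$'' lower bound on the induced curvature is not integrable against $\omega^{n-1}$ across a codimension-$\ge 2$ set in a way that produces a nonzero contribution, so it does not contradict $\deg_\omega L=0$; and the Moser/maximum-principle sketch needs uniform-in-$\epsilon$ control of $|s|^2_{H_\epsilon}$ near the zero set, which you have no mechanism to produce (the metrics $H_\epsilon$ are unrelated for different $\epsilon$).

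The paper bypasses this entirely by working with the current
\[
T_\epsilon=\tfrac{\sqrt{-1}}{2\pi}\partial\overline{\partial}\log|s|^2_{H_\epsilon},
\]
which is quasi-positive: $T_\epsilon+\delta(\epsilon)\omega\ge 0$ with $\delta(\epsilon)\to 0$. Integrating against $\omega^{n-1}$ shows $T_\epsilon\rightharpoonup 0$. The point is that if $s(x)=0$ then, regardless of the codimension of $\{s=0\}$, the Lelong number satisfies $\nu(T_\epsilon,x)\ge 1$ for every $\epsilon$ (this is just the local statement that $\log|s|^2_{H_\epsilon}$ has at least a logarithmic pole at $x$). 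Siu's upper semicontinuity of Lelong numbers under weak limits of closed positive (or quasi-positive) currents then forces $\nu(0,x)\ge 1$, a contradiction. This single stroke replaces your entire case analysis and is exactly what is missing from your argument.
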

\begin{proof}
Since $s$ is a $\phi$-invariant, so we have $\phi (s)=\eta \otimes s,$
where $\eta $ is a $(1, 0)$-form. Let us choose a local complex coordinates $\{z^{i}\}_{i=1}^{n}$. We can write $\phi,\ \eta$ as $\phi=\phi_i dz^i$, $\eta=\eta_idz^i$, where $\phi_i$ are holomorphic endomorphisms of $E$ satisfying $\phi_i(s)=\eta_i s$. Set $V=V^{i}\frac{\partial }{\partial z^{i}}$. Given a Hermitian metric $H$ on $E$, we have
\begin{eqnarray}
\begin{array}{lll}
\sqrt{-1}H([\phi , \phi^{\ast H}] s, s )(V, \overline{V})&=&\sum\limits_{i,j=1}^{n}V^{i}\overline{V^{j}}H(\{\phi_{i}(\phi_{j})^{\ast H}-(\phi_{j})^{\ast H}\phi_{i}\}s, s)\\
&=&\sum\limits_{i,j=1}^{n}V^{i}\overline{V^{j}}\{H((\phi_{j})^{\ast H}s, (\phi_{i})^{\ast H}s) -H(\phi_{i}s, \phi_{j} s)\}\\
&=&\sum\limits_{i,j=1}^{n}V^{i}\overline{V^{j}}\{ H(\overline{\eta_j}s+G_{j}, \overline{\eta_i}s+G_{i}    )\\
& &-\eta_{i}\overline{\eta_{j}}H(s, s)\}\\
&=&\sum\limits_{i,j=1}^{n}H(\overline{V^{j}}G_{j}, \overline{V^{i}}G_{i})\geq 0,\\
\end{array}
\end{eqnarray}
where $G_{j}=(\phi_{j})^{\ast H}s-\overline{\eta_j}s$. So, we know that $\sqrt{-1}H([\phi , \phi^{\ast H}] s, s )$ is a nonnegative $(1, 1)$-form, i.e.
\begin{eqnarray}\label{nonn}
\sqrt{-1}H([\phi , \phi^{\ast H}] s, s )\geq 0.
\end{eqnarray}

For $\forall \epsilon >0$, there exists a hermitian metric $H_{\epsilon}$ such that $$\sup\limits_X |F_{H_{\epsilon}, \overline{\partial }_{E}, \phi}|<\epsilon.$$ Consider the $(1,1)$ current
$$T_{\epsilon}=\frac{\sqrt{-1}}{2\pi }\partial \overline{\partial }\log H_{\epsilon }(s,s).$$
By (\ref{nonn}) and direct calculation, we have
\begin{eqnarray}
\begin{array}{lll}
T_{\epsilon}&\geq &-\frac{H_{\epsilon}(\frac{\sqrt{-1}}{2\pi}F_{H_{\epsilon}}(s),s)}{H_{\epsilon}(s,s)}\\
&\geq & -\frac{H_{\epsilon}(\frac{\sqrt{-1}}{2\pi}\{F_{H_{\epsilon}}+[\phi , \phi ^{\ast H_{\epsilon}}]\}(s),s)}{H_{\epsilon}(s,s)}\\
&\geq &-\delta(\epsilon)\omega .\\
\end{array}\end{eqnarray}
in current sense, where  $\delta(\epsilon)\rightarrow 0$ as $\epsilon\rightarrow 0$.
Since $\omega $ is K\"ahler, we have
$$0\leq\int_X \left(T_{\epsilon}+\delta(\epsilon)\omega\right)\wedge \omega^{n-1}=\delta(\epsilon)\int_X \omega^{n}.$$
This implies $T_{\epsilon} \rightharpoonup 0$ (subsequently) weakly in current sense.
If $s(x)=0$ at some point $x\in X$, the Lelong number $\nu (T_{\epsilon }, x)\geq 1$. Thus by \cite{Siu}, the Lelong number of weak limit $T=\lim T_{\epsilon}$ satisfies  $\nu (T, x)\geq 1$. This gives a contradiction. So the nontrivial $\phi$-invariant section $s$ has no zeros.
\end{proof}
\medskip
Given a Hermtian metric $H_{0}$ on Higgs bundle $(E, \overline{\partial }_{E}, \phi )$ over K\"ahler manifold $(X, \omega )$, one can evolve the Hermitian metric by the following Donaldson's heat flow:
\begin{equation}\label{1}
H^{-1}\frac{\partial H(t)}{\partial t}=-2(\sqrt{-1}\Lambda_\omega(F_{H(t)}+[\phi,\phi^{*H(t)}])-\lambda Id_E).
\end{equation}
In \cite{simpson1988constructing}, Simpson proved the global existence of the above heat flow and showed the convergence under the condition that the Higgs bundle is stable. Without the assumption of the stability, the above flow (\ref{1}) may not converge at infinity. When the Higgs bundle $(E, \overline{\partial }_{E}, \phi)$ is semistable, in \cite{li2012existence}, the authors studied the asymptotic properties of the above flow and proved the existence of approximate Hermitian-Einstein metric structure. In fact, they proved that:

\begin{prop}\mbox{\rm{((3.14) in \cite{li2012existence})}}\label{6}
Let $H(t)$ be a solution of the Donaldson's heat flow (\ref{1}) on Higgs bundle $(E, \overline{\partial }_{E}, \phi)$.  If $(E, \overline{\partial }_{E}, \phi)$ is Higgs semi-stable, then
\begin{equation}\label{2}
\int_{X}|\sqrt{-1}\Lambda_\omega(F_{H(t)}+[\phi,\phi^{*H(t)}])-\lambda Id_E|^{2}_{H(t)}\frac{\omega^{n}}{n!}
\rightarrow 0
\end{equation}
as $t\rightarrow +\infty$.
\end{prop}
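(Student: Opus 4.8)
The plan is to control the single non-negative quantity
\begin{equation*}
g(t)=\int_{X}\big|\sqrt{-1}\Lambda_\omega(F_{H(t)}+[\phi,\phi^{*H(t)}])-\lambda\,Id_E\big|^{2}_{H(t)}\,\frac{\omega^{n}}{n!},
\end{equation*}
and to establish two facts: that $g(t)$ is non-increasing in $t$, and that $g$ is integrable on $[0,\infty)$. Granting these, the conclusion is immediate, since a non-negative, monotone non-increasing, integrable function on $[0,\infty)$ must tend to $0$: if its limit $g_\infty$ were positive we would have $\int_0^\infty g\,dt\geq\int_0^\infty g_\infty\,dt=\infty$. Thus \eqref{2} reduces to monotonicity and integrability of $g$.

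Both properties are cleanest when read off the gradient-flow structure of \eqref{1}. Writing $\Phi(t)=\sqrt{-1}\Lambda_\omega(F_{H(t)}+[\phi,\phi^{*H(t)}])-\lambda\,Id_E$, the flow \eqref{1} reads $H^{-1}\partial_t H=-2\Phi$, which is exactly the downward $L^2$-gradient flow of the Donaldson functional $M(H_0,\,\cdot\,)$ on the space of Hermitian metrics; with the standard normalization $\tfrac{d}{dt}M(H_0,H(t))=-2\,g(t)$. The Donaldson functional, taken in its Hitchin--Simpson form with the term $[\phi,\phi^{*H}]$ included, is geodesically convex along the geodesics $H_0e^{ts}$ of that space, the Higgs contribution to its second variation being non-negative for the same reason as in \eqref{nonn}. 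Since the $L^2$-gradient flow of a geodesically convex functional has non-increasing velocity norm, and $\tfrac14\|H^{-1}\partial_tH\|^2_{L^2}=g(t)$, we obtain $g'(t)\leq 0$. Integrating the functional identity gives $\int_0^Tg\,dt=\tfrac12\big(M(H_0,H_0)-M(H_0,H(T))\big)$, so $\int_0^\infty g\,dt<\infty$ as soon as $M(H_0,H(t))$ is bounded from below.

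The whole theorem therefore hinges on one analytic fact: for a semistable Higgs bundle the Donaldson functional is bounded from below along the flow. This is the main obstacle, and it is exactly here that semistability, and the difference between the Kähler and the projective settings, enter. I would argue by contradiction. If $M(H_0,H(t))\to-\infty$, the metrics $H(t)$ must degenerate, and an Uhlenbeck--Yau type weak-compactness argument produces from the degeneration a nontrivial $L^2_1$ weakly holomorphic projection $\pi$ which is $\phi$-invariant, hence determines a coherent Higgs subsheaf $\mathcal F\subset E$; the rate at which $M$ decreases bounds the slope of $\mathcal F$ from below and forces $\mu(\mathcal F)\geq\mu(E)$. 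In the stable case this already contradicts stability, but under mere semistability one only obtains the non-strict equality $\mu(\mathcal F)=\mu(E)$, and no contradiction is immediate.

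Resolving this boundary case is the technical heart of the argument. I would pass to the saturation of $\mathcal F$, a $\phi$-invariant Higgs subbundle outside an analytic set of codimension $\geq 2$ that realizes $E$ as a Higgs extension with quotient again of slope $\mu(E)$, and induct on the rank: the flow restricted to the sub and the quotient produces approximate structures on each, which an extension argument in the spirit of Proposition \ref{115} reassembles into the desired lower bound for $M$ on $E$, contradicting $M(H_0,H(t))\to-\infty$. Because Mehta--Ramanathan's restriction theorem is unavailable over a general compact Kähler manifold, it is essential that the subsheaf extraction and the induction be carried out analytically through the flow rather than algebraically, while preserving $\phi$-invariance at every stage; making this precise is where the real work lies.
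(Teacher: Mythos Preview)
The paper does not prove this proposition; it simply quotes it as (3.14) from \cite{li2012existence}. So there is no in-paper argument to compare against, and your proposal stands or falls on its own.

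Your overall reduction is sound: $g(t)$ is non-increasing (this also follows, more directly than via convexity, from the pointwise subsolution inequality $(\partial_t-\Delta)|\Phi|^2\leq 0$ satisfied along the flow), and integrability of $g$ on $[0,\infty)$ is equivalent to a lower bound for the Donaldson functional $M(H_0,\cdot)$ along the flow. So everything reduces, as you say, to ``semistable $\Rightarrow$ $M$ bounded below''.

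The gap is in your treatment of this last implication. You claim that the Uhlenbeck--Yau/Simpson extraction from $M(H_0,H(t))\to-\infty$ gives only $\mu(\mathcal F)\geq\mu(E)$, leaving the equality case to be handled by an induction on rank. In fact the standard argument, when fed the hypothesis $M\to-\infty$ (as opposed to mere metric degeneration, which is the input in the stable existence theorem), already yields a \emph{strict} destabilizer: after normalizing $s(t)=\log(H_0^{-1}H(t))$ and passing to a weak $L^2_1$ limit $u_\infty$, the asymptotic slope of $M$ along the geodesic ray $H_0e^{\tau u_\infty}$ equals a weighted sum of the quantities $\mu(E)-\mu(E_i)$ over the eigen-filtration of $u_\infty$, and the divergence $M\to-\infty$ forces this slope to be strictly negative, hence some $\mu(E_i)>\mu(E)$. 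This already contradicts semistability; no separate boundary case arises. You have conflated the extraction used to prove existence of Hermitian--Einstein metrics in the stable case with the sharper one driven by unboundedness of $M$.

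Your proposed inductive fix is therefore unnecessary, and is in any case problematic as written: the saturation of $\mathcal F$ yields a quotient that is only torsion-free, so ``the flow restricted to the quotient'' is undefined; Proposition~\ref{115} produces approximate Hermitian flatness under vanishing Chern numbers, not a lower bound on $M$ in general, so it does not transfer the needed estimate across extensions; and relying on an extension argument that presupposes the quotients are bundles verges on assuming Theorem~\ref{5}, which is precisely what this proposition is invoked to prove.
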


Fixing a Hermitian metric $H_{0}$ on the bundle $E$, we consider the Higgs structures from the Gauge theory. Let
\begin{eqnarray}
\mathcal{B}_{E, H_{0}} =\left\{(A,\phi)\in\mathcal{A}^{1,1}_{H_0}\times \Omega^{1,0}(End (E))\Big|\bar{\partial}_A(\phi)=0, \phi\wedge\phi=0\right\}
\end{eqnarray}
where ${ \mathcal{A}}^{1,1}_{H_{0}}$ denotes the space of unitary integrable connections of $E$ compatible with the metric $H_{0}$. It is well known that given a unitary integrable connection
$A\in \mathcal{A}^{1,1}_{H_0}$  (i.e. whose curvature $F_{A}$ is of type
$(1,1)$ ), $D_{A}^{(0,1)}=\overline{\partial }_{A}$ defines a
holomorphic structure on $E$. A pair $(A, \phi )\in \mathcal{B}_{E, H_{0}}$ is called a Higgs pair and it determines one Higgs structure on $E$, i.e. $(E, D_{A}^{0, 1}, \phi )$ is a Higgs bundle.

The Yang-Mills-Higgs functional is defined on the space of Higgs pairs $\mathcal{B}_{E, H_{0}}$
by
\begin{equation}
 \mbox{\rm{YMH}}(A,\phi)=\int_X (|F_A+[\phi,\phi^{*H_0}]|^{2}+2| \partial_A\phi|^{2})d\,V_g .
\end{equation}
\\
We consider the gradient heat flow of the above functional, i.e. the following Yang-Mills-Higgs flow
\begin{equation}
\begin{cases}\label{3}
\frac{\partial A}{\partial t}=-D^*_A F_A-\sqrt{-1}(\partial_A\Lambda_\omega-\overline{\partial}_A\Lambda_\omega)[\phi,\phi^{*H_0}],
\\
\frac{\partial \phi}{\partial t}=-[\sqrt{-1}\Lambda_\omega(F_A+[\phi,\phi^{*H_0}]),\phi],
\\
A|_{t=0}=A_0,\ \ \ \ \phi|_{t=0}=\phi_0.
\end{cases}
\end{equation}
The global existence and
uniqueness of the solution for the above gradient flow had been proved in \cite{li2011gradient} (also in \cite{Wi}). In fact,
given any initial  Higgs pair $(A_0,\phi_0)$, the Yang-Mills-Higgs flow has a unique solution $(A(t),\phi(t))$ in the complex gauge orbit of $(A_0,\phi_0)$, i.e. $(A(t),\phi(t))=g(t)(A_0,\phi_0)$, where $g(t)^{*H_0}g(t)=H_0^{-1} H(t),\, g(t)\in \mathcal{G}^{\mathcal{C}}$ and $ H(t)$ is the solution of \rm{(\ref{1})} on Higgs bundle $(E, \overline{\partial }_{A_{0}}, \phi_{0})$ with initial data $H_{0}$. Here $\mathcal{G}^{\mathcal{C}}$ is the complex gauge group, and it acts on the space of Higgs pair $\mathcal{B}_{(E,H_0)}$ as following: let
$\sigma \in { \mathcal{G}}^{C}$,
\begin{eqnarray}
\overline{\partial }_{\sigma(A)}=\sigma \circ \overline{\partial
}_{A}\circ \sigma^{-1}, \quad \partial _{\sigma (A)}=(\sigma^{\ast
H_{0}})^{-1} \circ \partial _{A}\circ \sigma^{\ast H_{0}};
\end{eqnarray}
\begin{eqnarray}
\sigma (\phi )=\sigma \circ \phi \circ \sigma^{-1}.
\end{eqnarray}
By the definitions, the Yang-Mills-Higgs flow (\ref{3}) is evolve Higgs pairs on a fixed Hermitian bundle $(E, H_{0})$ and the Donaldson's heat flow (\ref{1}) is evolve  Hermitian metrics on a fixed Higgs bundle $(E, \overline{\partial }_{E}, \phi )$. From the above, we know that these two heat flows are equivalent modulating a sequence of complex gauge transformations.

Let $H(t)$ be the solution of the Donaldson's heat flow (\ref{1}) on the Higgs bundle $(E, \overline{\partial }_{E}, \phi_{0})$ with initial metric $H_{0}$, and $(A(t), \phi (t))$ be the solution of the Yang-Mills-Higgs flow (\ref{3})  on the Hermitian bundle $(E, H_{0})$ with initial data $(A_{0}, \phi_{0})$, where
$A_{0}=D_{(E, \overline{\partial }_{E}, H_{0})}$. It is easy to check the following relations:
\begin{equation}
\begin{split}
\\&\overline{\partial}_{A(t)}\phi^{*H_0}=g(t)\circ\overline{\partial}_{E}\phi_{0}^{*H(t)}\circ g(t)^{-1},
\\&\partial_{A(t)}\phi(t)=g(t)\circ\partial_{H(t)}\phi_{0}\circ g(t)^{-1},
\\&F_{A(t)}+[\phi(t),\phi(t)^{*H_0}]=g(t)\circ( F_{H(t)}+[\phi_{0},\phi_{0}^{*H(t)}])\circ g(t)^{-1},
\end{split}
\end{equation}
and
\begin{equation}\label{4}
\begin{split}
&| \partial_{A(t)}\phi(t)|_{H_{0}}^{2}=|\bp_{A(t)}\phi^{*H_0}|_{H_{0}}^2=|\partial_{H(t)}\phi_{0}|_{H(t)}^{2}=|\bp_{E}\phi_{0}^{*H(t)}|_{H(t)}^2,
\\&|(F_{A(t)}+[\phi(t),\phi(t)^{*H_0}])|_{H_{0}}^{2}=|F_{H(t)}+[\phi_{0},\phi_{0}^{*H(t)}]|_{H(t)}^{2},
\\&|\sqrt{-1}\Lambda_{\omega}(F_{A(t)}+[\phi(t),\phi(t)^{*H_0}])|_{H_0}^{2}
=|\sqrt{-1}\Lambda_{\omega}(F_{H(t)}+[\phi_{0},\phi_{0}^{*H(t)}])|_{H(t)}^{2}.
\end{split}
\end{equation}

\medskip

Along the Yang-Mills-Higgs flow, we have
the following inequality (\cite[p.~1384]{li2011gradient})
\begin{eqnarray}\label{8}
\Big(\triangle_g-\frac{\partial}{\partial t}\Big)| \phi |_{H_0}^2\geq2| \nabla_A \phi|_{H_0}^2+C_1(|\phi|_{H_0}^2+1)^2-C_2(|\phi|_{H_0}^2+1),
\end{eqnarray}
and the uniform $C^0$ bound of $\phi$ (Lemma $2.3$ in \cite{li2011gradient})
\begin{eqnarray}\label{9}
\sup \limits_{X}|\phi(t)|_{H_0}^{2}\leq \sup\limits_{X}|\phi_{0}|_{H_0}^{2}+C_{3},
\end{eqnarray}
where constants $C_{i}$ depend only on the geometry of $(X,\omega)$ and the initial data $(A_0,\phi_0)$.
We also have the  Bochner type inequality ((2.12) in \cite{li2011gradient})
\begin{equation}\label{11}
\begin{split}
&\Big(\triangle_g-\frac{\partial}{\partial t}\Big)(|F_{A(t)}+[\phi(t),\phi(t)^{*H_0}]|_{H_0}^{2}+2|\partial_{A(t)}\phi(t)|_{H_{0}}^{2})
\\&-2|\nabla_{A(t)}(|F_{A(t)}+[\phi(t),\phi(t)^{*H_0}]|_{H_{0}}^{2})-4|\nabla_{A(t)}(\partial_{A(t)}\phi(t))|_{H_{0}}^{2}
\\ \geq&-C(n)(|F_{A(t)}+[\phi(t),\phi(t)^{*H_0}]|_{H_{0}}+|\nabla_{A(t)}\phi(t)|_g+|\phi(t)|_{H_{0}}^{2}+|Rm|_g)
\\&(|F_{A(t)}+[\phi(t),\phi(t)^{*H_0}]|_{H_{0}}^{2}+2|\partial_{A(t)}\phi(t)|_{H_{0}}^{2}),
\end{split}
\end{equation}
where $C(n)$ is a constant depending only on the dimension of $X$. In the following, we denote
\begin{eqnarray*}
e(A(t), \phi(t))=|F_{A(t)}+[\phi(t),\phi(t)^{*H_0}]|_{H_0}^{2}+2|\partial_{A(t)}\phi(t)|_{H_{0}}^{2},
\end{eqnarray*}
and
\begin{eqnarray*}
P_r(x_0 , t_{0})=B_r(x_0)\times[t_0-r^2,t_0+r^2]
\end{eqnarray*}
for any fixed point $(x_{0}, t_{0})\in X\times \mathbb{R}_{+}$.
$\epsilon$-regularity theorem  is important in studying the asymptotic behavior of the Yang-Mills flow ( \cite{hong2004asymptotical}). For the Yang-Mills-Higgs flow (\ref{3}), we have the following $\epsilon$-regularity theorem. (Theorem 3.1 in \cite{li2011gradient} for K\"ahler surface case, Theorem 2.6. in \cite{li2014} and Theorem 4 in for higher dimensional case. )

\medskip

\begin{prop}\label{21}\mbox{\rm{($\epsilon $-regularity theorem)}}
 Let $(A(t),\phi (t))$ be a smooth solution of (\ref{3}) over an $n$-dimensional compact K\"ahler manifold $(X,\omega)$ with initial value $(A_0,\phi_0)$. There exist positive constants $\epsilon_0,\delta_0<1/4$, such that, if for some $0<R<\min\{i_{X}, \frac{\sqrt{t_0}}{2}\}$, the inequality
\begin{equation*}
  R^{2-2n}\int_{P_{R}(x_{0},t_{0})}e(A(t),\phi(t))dV_g\,dt\leq \epsilon_0
\end{equation*}
holds, then for any $ \delta\in(0,\, \delta_0)$, we have
\begin{equation}\label{k1}
\sup\limits_{P_{\delta R }(x_{0},t_{0})}e(A(t),\phi(t)) \leq 16(\delta R)^{-4},
\end{equation}
and
\begin{equation}\label{k2}
\sup\limits_{P_{\delta R}(x_{0},t_{0})}|\triangledown_{A(t)}\phi(t)|_{H_0}^{2}\leq C_4,
\end{equation}
where $C_4$ is a constant depending on the geometry of $(X,\omega)$, the initial data $(A_0,\phi_0)$, $\delta_0$ and $R$.
\end{prop}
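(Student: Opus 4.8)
The plan is to carry out a parabolic $\epsilon$-regularity argument in the style of Hong--Tian's treatment of the Yang--Mills flow \cite{hong2004asymptotical}, adapted to the Higgs setting. First I would convert the Bochner inequality (\ref{11}) into a clean scalar differential inequality for the energy density $e(A(t),\phi(t))$. Discarding the manifestly nonnegative gradient terms on the left of (\ref{11}), and using that on the fixed compact manifold $|Rm|_g$ is bounded, that $|\phi(t)|_{H_0}$ is uniformly bounded by (\ref{9}), and that $\bp_{A(t)}\phi(t)=0$ forces $|\nabla_{A(t)}\phi(t)|_g\le C\,e^{1/2}$ while $|F_{A(t)}+[\phi(t),\phi(t)^{*H_0}]|_{H_0}\le e^{1/2}$, the zeroth order coefficient in (\ref{11}) is dominated by $C(e^{1/2}+1)$. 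This gives the subsolution inequality
$$\Big(\frac{\partial}{\partial t}-\triangle_g\Big)e\ \le\ C\big(e^{1/2}+1\big)\,e \qquad \text{on } P_R(x_0,t_0),$$
with $C$ depending only on the geometry of $(X,\omega)$ and the initial data. The sole obstruction to a direct application of the linear parabolic mean-value inequality is the nonlinear factor $e^{1/2}$.

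To linearize I would argue by contradiction. If no pair $(\epsilon_0,\delta_0)$ as claimed existed, there would be a sequence of solutions, radii $R_k$ and points at which the scale-invariant energy $R_k^{2-2n}\int_{P_{R_k}}e\,dV_g\,dt\to0$ yet (\ref{k1}) is violated at ever larger values of $e$. Applying a point-selection procedure (maximizing the scale-invariant weighted quantity $(\rho-d)^4\,e$, where $d$ denotes parabolic distance) I would extract base points $(\bar x_k,\bar t_k)$ and scales $r_k$ at which $e$ attains a large value $e_k$ while remaining bounded by $4e_k$ on the sub-cylinder $P_{r_k/2}(\bar x_k,\bar t_k)$. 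Rescaling the flow parabolically so that the energy density at the base point equals $1$ then produces flows on parabolic cylinders whose radii tend to infinity, on which the rescaled energy density is bounded (by $4$) and the rescaled metric converges to the flat model in $C^\infty_{\mathrm{loc}}$. A (local, almost-)monotonicity formula for the scale-invariant energy of the Yang--Mills--Higgs flow is needed here to transfer the smallness from scale $R_k$ down to the scales produced by the rescaling, so that the total energy of the rescaled flows tends to $0$.

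On the rescaled cylinders $e$ is uniformly bounded, so $C(e^{1/2}+1)$ is bounded and the inequality above becomes the genuinely linear $(\partial_t-\triangle_g)e\le C'e$ with $C'$ uniform. The local maximum principle for nonnegative subsolutions then bounds the value at the base point by its integral over a fixed unit cylinder,
$$1\ \le\ C\int_{P_1}e\,dV_g\,dt,$$
whose right-hand side is a scale-invariant energy tending to $0$ along the sequence --- a contradiction. Running these estimates quantitatively rather than by contradiction pins down admissible constants $\epsilon_0$ and $\delta_0<1/4$ and yields (\ref{k1}) with the stated constant $16(\delta R)^{-4}$. For the gradient estimate (\ref{k2}), once $e$, and hence $|\partial_{A(t)}\phi(t)|_{H_0}$, is controlled on $P_{\delta R}$, I would substitute this bound into the inequality (\ref{8}) for $|\phi(t)|_{H_0}^2$ and apply a further local mean-value (or Shi-type interpolation) estimate on a slightly smaller cylinder to extract $\sup|\nabla_{A(t)}\phi(t)|_{H_0}^2\le C_4$.

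I expect the main obstacle to be the blow-up step: guaranteeing that the rescaled connections and Higgs fields, after an Uhlenbeck/Coulomb gauge fixing, converge to a well-defined limit, and --- crucially --- establishing the monotonicity formula that propagates the energy smallness across scales so that the limiting energy vanishes. Keeping the nonlinear $e^{3/2}$ term under control uniformly throughout the rescaling, and verifying that the constants in (\ref{11}) together with the $C^0$ bound (\ref{9}) are scale-stable, is the delicate analytic heart of the argument.
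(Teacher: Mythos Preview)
The paper does not give its own proof of this proposition: it is quoted as an established result, with explicit attribution to Theorem~3.1 in \cite{li2011gradient} (for K\"ahler surfaces) and Theorem~2.6 in \cite{li2014} (for higher dimensions), so there is no in-paper argument to compare against. Your outline---Bochner inequality $\Rightarrow$ scalar subsolution inequality for $e$, point-selection and parabolic rescaling, a monotonicity formula to propagate energy smallness across scales, and the parabolic mean-value inequality on the rescaled cylinder---is precisely the Hong--Tian template, and this is indeed the route taken in those cited works; in that sense your proposal is consonant with the literature the paper relies on. One small caution: your reduction ``$\overline{\partial}_{A(t)}\phi(t)=0$ forces $|\nabla_{A(t)}\phi(t)|\le C\,e^{1/2}$'' is not quite immediate, since $|\partial_{A}\phi|$ controls only the antisymmetric part of $\nabla^{1,0}\phi$; in \cite{li2011gradient} this term is handled by deriving and exploiting a separate evolution inequality for $|\nabla_A\phi|^2$ (or equivalently by the integrated version of (\ref{8}) combined with Moser iteration), which is closer to the ``Shi-type interpolation'' alternative you mention for (\ref{k2}) than to a direct pointwise substitution.
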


\section{Proof of theorem 1.1}

In this section, we give the detailed proof of Theorem \ref{5}.

\begin{proof}
The sufficiency is obviously from Prop \ref{115}. Here we only need to prove the necessity, i.e. there is a filtration of $E$ by $\phi$-invariant subbundles
\begin{eqnarray}\label{3.1}
0=E_{0}\subset E_{1}\subset \cdots \subset E_{l}=E ,
\end{eqnarray}
such that every quotient $Q_{i}=E_{i}/E_{i-1}$ is Higgs Hermitian flat.

Let $H(t)$ be the solution of (\ref{1}) on the Higgs bundle $(E, \overline{\partial }_{E},\phi_{0})$ with initial metric $H_{0}$, and $(A(t), \phi (t))$ be the solution of (\ref{3}) on the Hermitian bundle $(E, H_{0})$ with initial data $(A_{0}, \phi_{0})$, where $A_{0}=D_{(E, \overline{\partial }_{E}, H_{0})}$. We divide the following proof into three steps.
\paragraph{Step 1:}First, we use the $\epsilon$-regularity theorem (Prop. \ref{21}) to prove that
 a semistable Higgs bundle with $c_{1}(E)\cdot[\omega]^{n-1}=0$, $ ch_{2}(E)\cdot[\omega]^{n-2}=0$ is approximate Higgs Hermitian flat.
\par
From the Chern-Weil theory (\ref{111}), we have
\begin{equation*}
\begin{split}
 \mbox{\rm{YMH}}(A(t),\phi(t))
=&\int_X |F_{A(t)}+[\phi(t),\phi(t)^{*H_0}]|_{H_0}^{2}+2|\partial_{A(t)}\phi(t)|_{H_0}^{2} \frac{\omega^{n}}{n!}
\\=&\int_X |\sqrt{-1}\Lambda_{\omega}(F_{A(t)}+[\phi(t),\phi(t)^{*H_0}])|_{H_0}\frac{\omega^{n}}{n!}
\\=&\int_X |\sqrt{-1}\Lambda_{\omega}(F_{H(t)}+[\phi_0,\phi_0^{*H(t)}])|_{H(t)}\frac{\omega^{n}}{n!}.
\end{split}
\end{equation*}
 Since $\mathfrak{E}$ is Higgs semi-stable, from Proposition \ref{6} and the relations (\ref{4}),
 we obtain
\begin{equation}\label{17}
\begin{split}
 \mbox{\rm{YMH}}(A(t),\phi(t))&=\int_{X}|\sqrt{-1}\Lambda_{\omega}(F_{A(t)}+[\phi(t),\phi(t)^{*H_0}])|^{2}_{H_0}\frac{\omega^{n}}{n!}
\\&=\int_{X}|\sqrt{-1}\Lambda_{\omega}(F_{H(t)}+[\phi_{0},\phi_{0}^{*H(t)}])|_{H(t)}^{2}\frac{\omega^{n}}{n!}
\rightarrow 0.
\end{split}
\end{equation}
as $t\rightarrow \infty$.
Then for any $\epsilon > 0$, there exists a positive constant $t(\epsilon)$, such that for $ \forall t>t(\epsilon)$

$$\mbox{\rm{YMH}}(A(t),\phi(t))<\epsilon.$$

Choose
$R=\frac{i_{X}}{2} $ and $\epsilon=\epsilon_0R^{2n-4}$, where $\epsilon_0$ is the constant of Theorem \ref{21}. Then for $\forall t>t_0=t(\epsilon)+i_{X}^{2}$ and $\forall x_0\in X$, we have

\begin{equation*}
\begin{split}
&R^{2-2n}\int_{P_{R}(x_0,t)}e(A,\phi)dV_g\,d\tau
\\ \leq&R^{2-2n}\int_{t-R^{2}}^{t+R^{2}}\mbox{YMH}(A(\tau),\phi(\tau))d\tau
\\<&\epsilon R^{4-2n}=\epsilon_0.
\end{split}
\end{equation*}
By the $\epsilon $-regularity theorem (Prop.\ref{21} ), we have
\begin{eqnarray*}e(A,\phi)(x_0,t)\leq\sup\limits_{P_{\delta_0R/2}(x_{0},t)}e(A,\phi)<16(\delta_0 R/2)^{-4}=256(\delta_0 R)^{-4}
\end{eqnarray*}
and
\begin{eqnarray*}|\nabla_{A}\phi|_{H_0}^{2}(x_0,t)\leq\sup\limits_{P_{\delta_0R/2}(x_{0},t)}|\nabla_{A}\phi|_{H_0}^{2}\leq C_5,
\end{eqnarray*}
where $\delta_0$ is a constant in Theorem \ref{21}, and $C_5$ is a constant depending on the geometry of $(X,\omega)$ and the initial data $(A_0,\phi_0)$.
 In addition, $\sup\limits_X \left(|F_A+[\phi,\phi^{*H_0}]|_{H_0}^2+2|\partial_A\phi|_{H_0}^2\right)$ and $\sup\limits_X {|\nabla_{A(t)}\phi(t)|_{H_0}^2}$ are bounded in $\left[0,t_0\right]$.
Then from the Bochner type inequality (\ref{11}), we know there exists a positive constant $C_6$, such that
\begin{equation*}
\begin{split}
&(\triangle_g-\frac{\partial}{\partial t})\left(|F_A+[\phi,\phi^{*H_0}]|_{H_0}^2+2|\partial_A\phi|_{H_0}^2\right)
\\ \geq&-C_{6}\left(|F_A+[\phi,\phi^{*H_0}]|_{H_0}^2+2|\partial_A\phi|_{H_0}^2\right).
\end{split}
\end{equation*}
Using the  parabolic mean value inequality (Theorem 14.7 in \cite{PL}), we have
\begin{equation*}
\sup\limits_{X}e(A(t),\phi(t))\leq C_{7}\int_{X}
 e(A(t-1),\phi(t-1))dVg,
\end{equation*}
   where $C_{7}$ is a positive constant. Applying (\ref{17}), we get
 \begin{eqnarray}\label{k3}\sup\limits_{X} \left(|F_A+[\phi,\phi^{*H_0}]|_{H_0}^2+2|\partial_A\phi|_{H_0}^2\right)\rightarrow 0\end{eqnarray} as $t\rightarrow +\infty$.
\\
\paragraph{Step 2:}
Suppose $(E,\overline{\partial}_E,\phi)$ is strictly semi-stable. There is a Higgs coherent subsheaf $S$ of minimal rank $p$ with $\mbox{deg}(S)=0$. This implies $S$ is Higgs stable. After replacing $S$ by its double dual $S^{\ast \ast}$, we can assume that $S$ is reflexive. We show that $S$ is in fact a Higgs subbundle.

Consider the following exact sequence of Higgs sheaves:
\begin{equation}\label{114}
0\longrightarrow (S,\phi_S)\longrightarrow(E,\phi)\longrightarrow(Q,\phi_Q)\longrightarrow0,
\end{equation}
where $\phi_S$, $\phi_Q$ are the induced Higgs field.
For each $H(t)$, we have the bundle isomorphism on $X\setminus Z$, the locally free part of $S$:
$$f_H: S\oplus Q\rightarrow E.$$
From the Gauss-Codazzi equation (\ref{112}), the $(1,1)$ part of the pull-back curvature $f_H^*(F_{H,\overline{\partial},\phi})$ is
\begin{equation*}
f^*_H(F_{H, \overline{\partial }_{E}, \phi})^{1,1}=\left(\begin{matrix}
F_{H_S,\phi_S}^{1,1}-\gamma \wedge \gamma^{\ast} +\zeta \wedge \zeta^{\ast }  &  D_{S\otimes Q^{\ast}}^{1, 0}\gamma +\zeta \wedge \phi^*_{Q}+\phi_{S}^{\ast}\wedge \zeta \\
D_{S^{\ast}\otimes Q}^{0, 1}\gamma^{\ast} +\zeta^{\ast} \wedge \phi_{S}+\phi_{Q}\wedge \zeta^{\ast}  & F_{H_Q,\phi_Q}^{1,1}-\gamma^{\ast} \wedge \gamma +\zeta^{\ast} \wedge \zeta  \\
\end{matrix}
\right )
\end{equation*}
where $F_{H_S,\phi_S}^{1,1}=F_{H_S}+[\phi_S,\phi_S^*]$ and $F_{H_Q,\phi_Q}^{1,1}=F_{H_Q}+[\phi_Q,\phi_Q^*]$.
Since $\mbox{tr}([\phi_S,\phi_S^*])=0$ and
 $\mbox{tr}\frac{\sqrt{-1}}{2\pi}(-\gamma \wedge \gamma^{\ast} +\zeta \wedge \zeta^{\ast })\geq0$, we have \begin{eqnarray}\label{3.9}
 \mbox{tr}\frac{\sqrt{-1}}{2\pi}F_{H_S}\leq\mbox{tr}\frac{\sqrt{-1}}{2\pi}(F_{(H, \overline{\partial }_{E}, \phi)}^{(1,1)}|_S)\leq \varepsilon(t)\omega,
 \end{eqnarray}
 on $X\setminus Z$, where $\varepsilon(t)=C(n,r)\sup\limits_{X}|F_{(H, \overline{\partial }_{E}, \phi)}|\rightarrow 0$ as $t\rightarrow \infty.$

 Now, we consider the determinant line bundle $\det{S}=(\wedge^pS)^{**}$ and equip it with the possibly singular metric $\det{H_S(t)}$ on \X. By (\ref{3.9}), we have $\frac{\sqrt{-1}}{2\pi}\partial\overline{\partial}\log \det{H_S}+ \varepsilon(t)\omega\geq 0$ in current sense on \X. Then
\begin{equation}
\begin{split}
0&\leq \int_X\left(\frac{\sqrt{-1}}{2\pi}\partial\overline{\partial}\log \det{H_S}+ \varepsilon(t)\omega \right)\wedge \frac{\omega^{n-1}}{(n-1)!}
\\&=\mbox{deg}((\det{S})^{*})+\varepsilon (t) \int_{X}\frac{\omega^{n}}{n!}
\\&=\varepsilon (t) \int_{X}\frac{\omega^{n}}{n!}.
\end{split}
\end{equation}
So, subsequently, $\frac{\sqrt{-1}}{2\pi}\partial\overline{\partial}\log \det{H_S(t)}  $ converges to zero weakly in current sense. In particular $c_1(S)=0$ and $\det{S}$ is Hermitian flat. Furthermore, since $(E, \overline{\partial }_{E}, \phi )$ is Higgs approximate Hermitian flat,  $(\wedge^pE\otimes (\det{S})^{-1},\phi_p)$ is also Higgs approximate Hermitian flat, where $\phi_p$ is the induced Higgs field on $\wedge^pE\otimes (\det{S})^{-1}$. Prop.\ref{113} together with the fact that the bundle morphism $\det{S}\rightarrow \wedge^pE$ can be seen as a $\phi_p$-invariant section of $\wedge^pE\otimes (\det{S})^{-1}$ imply $\det{S}\rightarrow \wedge^pE$ has no zeros, i.e. the bundle morphism $\det{S}\rightarrow\wedge^pE$  is injective. Then the Lemma 1.20 in \cite{demailly1994compact} implies that $S$ is a Higgs subbundle of $E$ and (\ref{114}) is an exact sequence of Higgs bundles on \X.
\\

\paragraph{Step 3:}  It remains to show that $S$ is Higgs Hermitian flat and $Q$ is Higgs semi-stable  with $c_1(Q)\cdot[\omega]^{n-1}=ch_2(Q)\cdot[\omega]^{n-2}=0.$

From the exact sequence (\ref{114}), we have \begin{eqnarray}ch_2(S)+ch_2(Q)=ch_2(E),\ \  c_1(S)+c_1(Q)=c_1(E).\end{eqnarray} This shows that $c_1(Q)\cdot[\omega]^{n-1}=0.$
 On the other hand,  since $(S,\phi_S)$ is Higgs stable and $(E,\phi)$ is Higgs semistable, it is obviously that $(Q, \phi_{Q})$ is Higgs semi-stable.

By the Chern-Weil theory (\ref{111}) and  Proposition 2.3., we have $ch_2(S)\cdot[\omega]^{n-2}\geq 0$ and $ch_2(Q)\cdot[\omega]^{n-2}\geq 0$. Together with $ch_2(E)\cdot[\omega]^{n-2}=0$, we have $ch_2(S)\cdot[\omega]^{n-2}=ch_2(Q)\cdot[\omega]^{n-2}=0.$ Then $S$ is Higgs Hermitian flat. Since $(Q,\phi_Q)$ is a semi-stable Higgs bundle with $c_1(Q)\cdot[\omega]^{n-1}=ch_2(Q)\cdot[\omega]^{n-2}=0$, we set $(E_1,\phi_{E_1})=(S,\phi_S)$ and obtain the filtration (\ref{3.1}) by induction on the rank.

\end{proof}

\par
\bibliographystyle{plain}

\end{document}